\def\@settitle{%
  \vspace*{-0pt}
  \begin{flushleft}%
    \LARGE\bfseries
    \strut\@title\strut
  \end{flushleft}%
}
\def\@setauthors{%
  \begingroup
  \def\thanks{\protect\thanks@warning}%
  \trivlist
  \raggedright
  \large \@topsep27\p@\relax
  \advance\@topsep by -\baselineskip
  \item\relax
  \author@andify\authors
  \def\\{\protect\linebreak}%
  \authors
  \ifx\@empty\contribs
  \else
    ,\penalty-3 \space \@setcontribs
    \@closetoccontribs
  \fi
  \normalfont
 \@setaddresses
  \endtrivlist
  \endgroup
}
\def\@setaddresses{\par
  \nobreak \begingroup
  \small
  \def\author##1{\nobreak\addvspace\smallskipamount}%
  \def\\{\unskip, \ignorespaces}%
  \interlinepenalty\@M
  \def\address##1##2{\begingroup
    \par\addvspace\bigskipamount\noindent
    \@ifnotempty{##1}{(\ignorespaces##1\unskip) }%
    {\ignorespaces##2}\par\endgroup}%
  \def\curraddr##1##2{\begingroup
    \@ifnotempty{##2}{\nobreak\noindent\curraddrname
      \@ifnotempty{##1}{, \ignorespaces##1\unskip}\/:\space
      ##2\par}\endgroup}%
  \def\email##1##2{\begingroup
    \@ifnotempty{##2}{\nobreak\noindent E-mail address%
      \@ifnotempty{##1}{, \ignorespaces##1\unskip}\/:\space
      \ttfamily##2\par}\endgroup}%
  \def\urladdr##1##2{\begingroup
    \def~{\char`\~}%
    \@ifnotempty{##2}{\nobreak\noindent\urladdrname
      \@ifnotempty{##1}{, \ignorespaces##1\unskip}\/:\space
      \ttfamily##2\par}\endgroup}%
  \addresses
  \endgroup
  \global\let\addresses=\@empty
}
\def\@setabstracta{%
    \ifvoid\abstractbox
  \else
    \skip@17pt \advance\skip@-\lastskip
    \advance\skip@-\baselineskip \vskip\skip@
    \box\abstractbox
    \prevdepth\z@ 
    \vskip-28pt
  \fi
}
\renewenvironment{abstract}{%
  \ifx\maketitle\relax
    \ClassWarning{\@classname}{Abstract should precede
      \protect\maketitle\space in AMS document classes; reported}%
  \fi
  \global\setbox\abstractbox=\vtop \bgroup
    \normalfont\small
    \list{}{\labelwidth\z@
      \leftmargin0pc \rightmargin\leftmargin
      \listparindent\normalparindent \itemindent\z@
      \parsep\z@ \@plus\p@
      
    }%
    \item[\hskip\labelsep\bfseries\abstractname.]%
}{%
  \endlist\egroup
  \ifx\@setabstract\relax \@setabstracta \fi
}
\def\ps@headings{\ps@empty
  \def\@evenhead{%
    \setTrue{runhead}%
    \normalfont\scriptsize
    \rlap{\thepage}\hfill
    \def\thanks{\protect\thanks@warning}%
    \leftmark{}{}}%
  \def\@oddhead{%
    \setTrue{runhead}%
    \normalfont\scriptsize
    \def\thanks{\protect\thanks@warning}%
    \rightmark{}{}\hfill \llap{\thepage}}%
  \let\@mkboth\markboth
}\ps@headings
\def\section{\@startsection{section}{1}%
  \z@{-1.4\linespacing\@plus-.5\linespacing}{.8\linespacing}%
  {\normalfont\bfseries\Large}}
\def\subsection{\@startsection{subsection}{2}%
  \z@{-.8\linespacing\@plus-.3\linespacing}{.5\linespacing\@plus.2\linespacing}%
  {\normalfont\bfseries\large}}
\def\subsubsection{\@startsection{subsubsection}{3}%
  \z@{.7\linespacing\@plus.2\linespacing}{-1.5ex}%
  {\normalfont\bfseries}}
\def\@secnumfont{\bfseries}
\renewcommand\contentsnamefont{\bfseries}
\def\@starttoc#1#2{\begingroup
  \setTrue{#1}%
  \par\removelastskip\vskip\z@skip
  \@startsection{}\@M\z@{\linespacing\@plus\linespacing}%
    {.5\linespacing}{
      \contentsnamefont}{#2}%
  \ifx\contentsname#2%
  \else \addcontentsline{toc}{section}{#2}\fi
  \makeatletter
  \@input{\jobname.#1}%
  \if@filesw
    \@xp\newwrite\csname tf@#1\endcsname
    \immediate\@xp\openout\csname tf@#1\endcsname \jobname.#1\relax
  \fi
  \global\@nobreakfalse \endgroup
  \addvspace{32\p@\@plus14\p@}%
  \let\tableofcontents\relax
}
\def\contentsname{Contents}
\def\l@section{\@tocline{2}{.5ex}{0mm}{5pc}{}}
\def\l@subsection{\@tocline{2}{0pt}{2em}{5pc}{}}
\theoremstyle{plain}
\newtheorem{theorem}{Theorem}[section]
\newtheorem{proposition}[theorem]{Proposition}
\newtheorem{corollary}[theorem]{Corollary}
\theoremstyle{definition}
\newtheorem*{problem}{Problem}
\def\Nopagebreak{\@nobreaktrue\nopagebreak}
\def\Z{\mathbb{Z}}
\def\Q{\mathbb{Q}}
\def\N{\mathbb{N}}
\def\M{\mathcal{M}}
\def\b{\mathrm{b}}
\def\T{\mathcal{T}}
\def\H{\mathcal{H}}
\def\hF{\widehat{F}}
\def\l{\lambda}
\def\exp{\operatorname{exp}}
\def\Aut{\operatorname{Aut}}
\def\Ker{\operatorname{Ker}}
\def\to{\mathchoice{\longrightarrow}{\rightarrow}{\rightarrow}{\rightarrow}}
\newcommand{\shortxra}[2][]{\ext@arrow 0359\rightarrowfill@{#1}{#2}}
\def\longrightarrowfill@{\arrowfill@\relbar\relbar\longrightarrow}
\newcommand{\longxra}[2][]{\ext@arrow 0359\longrightarrowfill@{#1}{#2}}
\renewcommand{\xrightarrow}[2][]{\mathchoice{\longxra[#1]{#2}}%
  {\shortxra[#1]{#2}}{\shortxra[#1]{#2}}{\shortxra[#1]{#2}}}
\def\Nopagebreak{\@nobreaktrue\nopagebreak}
\begin{document}

\title
{Lower central subgroups of a free group and its subgroup}
\author{Minkyoung Song}

\begin{abstract}
	For a given free group $F$ of arbitrary rank (possibly infinite), and its subgroup $G$, we address the question whether a lower central subgroup of $G$ can contain a lower central subgroup of $F$. We show that the answer is no if $G$ does not normally generate $F$. The question comes from a study of Hirzebruch-type invariants from iterated $p$-covers for 3-dimensional homology cylinders.
\end{abstract}

\maketitle

\setcounter{tocdepth}{2}

\section{Introduction}
For a group $G$, denote by $G_m$ the $m$th term of the lower central series of $G$, defined inductively by $G_1=G$, $G_{m+1}=[G_m, G]$ for each $m\geq 1$. 

If $F$ is a free group and $G$ its subgroup, then it is obvious that $G_m$ is contained in $F_m$ for every $m \geq 1$. In this paper, we investigate the converse relation: whether some $F_m$ is contained in some $G_k$. Note that $\bigcap_m F_m = 0$.
For $k=1$, if $G$ is a normal subgroup of $F$ with abelian $F/G$, then $G_1$ contains $F_m$ for every $m\geq 2$. We can ask if a subgroup $G$ satisfies $G_2 \supseteq F_m$ for a certain large $m$.
As an answer, we prove the following result:
\begin{theorem}
	\label{theorem}
	Let $F$ be a free group and $G$ a subgroup of $F$ whose normal closure is not~$F$.
	Then $G_2$ never contains $F_m$ for any $m \in \N$.
\end{theorem}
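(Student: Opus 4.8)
The plan is to reduce the statement to producing a single homomorphism that simultaneously abelianizes $G$ and detects all $F_m$. Concretely, it suffices to construct a group $P$ and a homomorphism $f\colon F\to P$ such that $f(G)$ is abelian while $f(F)$ is \emph{not} nilpotent. Indeed, if $f(G)$ is abelian then $f(G_2)=[f(G),f(G)]=1$, so $G_2\subseteq\Ker f$; and if $f(F)$ is non-nilpotent then $f(F_m)=(f(F))_m\neq 1$ for every $m$, so $F_m\not\subseteq\Ker f$. Since $G_2\subseteq\Ker f$, this forces $F_m\not\subseteq G_2$ for all $m$ at once. (Throughout I assume $F$ is non-abelian, i.e.\ of rank $\geq 2$; for $F\cong\Z$ every $F_m$ with $m\ge 2$ is trivial and the statement is degenerate.)

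Second, I would build $f$ from the hypothesis. Let $Q=F/\langle\langle G\rangle\rangle$, which is nontrivial by assumption, and note $G$ lies in the kernel of $F\twoheadrightarrow Q$. If $Q$ is perfect, then being a nontrivial perfect group it is non-nilpotent, and $f\colon F\twoheadrightarrow Q$ already works. Otherwise $Q^{\mathrm{ab}}\neq 0$, so $F$ surjects onto a nontrivial cyclic group $C$ (either $\Z$ or $\Z/n$) through $Q$; let $K=\Ker(F\to C)$, so that $G\subseteq\langle\langle G\rangle\rangle\subseteq K$. Take $f\colon F\twoheadrightarrow F/[K,K]$. Then $f(G)\subseteq K/[K,K]$ is abelian, so the whole problem reduces to the single assertion that $F/[K,K]$ is non-nilpotent.

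Third, the crux, I would prove that $F/[K,K]$ is non-nilpotent via the conjugation action of $C$ on the relation module $A:=K^{\mathrm{ab}}$. Writing $E=F/[K,K]$, the subgroup $A$ is normal abelian with $E/A\cong C$, and a standard commutator computation gives $[A,\underbrace{E,\dots,E}_{j}]=I^{j}A$, where $I\subseteq\Z[C]$ is the augmentation ideal and $A$ is viewed as a $\Z[C]$-module. Hence if $E$ were nilpotent of class $c$ we would get $I^{c}A=0$, i.e.\ $I$ acts nilpotently on $A$. I would refute this using the Crowell--Lyndon relation-module sequence for the free group $F$ on a basis $X$,
\[
0\to A\to \Z[C]^{(X)}\to I\to 0,
\]
which realizes $A$ as a submodule of a free $\Z[C]$-module; after tensoring with $\Q$ this is the rational homology sequence of the $C$-cover of a wedge of circles. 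For $C=\Z$ the ring $\Z[C]$ is a domain and $I=(t-1)$ acts injectively on the torsion-free module $\Z[C]^{(X)}$, hence injectively on the nonzero module $A$, so $I^{c}A\neq 0$. For $C=\Z/n$ I would pass to $V=A\otimes\Q$; since $\Q[C]$ is semisimple, $I$ acts nilpotently on $V$ iff $C$ acts trivially on $V$, but the sequence gives $V\cong\Q[C]^{\,r-1}\oplus\Q$ (with $r=\mathrm{rank}\,F\ge 2$), which contains a copy of the regular representation and so carries a nontrivial $C$-action. Either way $I$ does not act nilpotently on $A$, whence $E$ is non-nilpotent.

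The main obstacle I anticipate is the finite cyclic case: unlike $C=\Z$, a submodule of a free $\Z[\Z/n]$-module can be annihilated by $I$ (the norm, i.e.\ trivial-isotypic, part genuinely lies in $A$), so injectivity fails and one must use the representation-theoretic identification of $H_1(K;\Q)$ together with the rank hypothesis to see that $A$ has a nontrivial $C$-constituent. Verifying the exact sequence above and the commutator identity $[A,\underbrace{E,\dots,E}_{j}]=I^{j}A$ carefully, so that nilpotency really forces $I^{c}A=0$, is the other technical point; by comparison the reduction to a cyclic quotient and the perfect-$Q$ shortcut are routine.
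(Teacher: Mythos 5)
Your reduction in the first two paragraphs, and the relation-module argument at its core, are correct as far as they go, and for \emph{finitely generated} $F$ they yield a complete proof by a genuinely different route from the paper's: the paper argues in coordinates, writing the kernel of $F\twoheadrightarrow\Z_p$ in a Nielsen--Schreier basis and showing that the vector of exponent sums of the iterated commutators $[x,y,x,\dots,x]$ evolves by a circulant matrix and never vanishes --- which is, in essence, your $\Q[C]$-isotypic analysis made explicit. However, for the theorem as actually stated --- $F$ of arbitrary, possibly infinite, rank, which is how the paper states and uses it (see the abstract and Corollary~\ref{corollary}) --- there is a genuine gap at the step ``$Q^{\mathrm{ab}}\neq 0$, so $F$ surjects onto a nontrivial cyclic group $C$ through $Q$.'' A nontrivial abelian group need not have a nontrivial cyclic quotient: a divisible group such as $\Q$ has none, since every quotient of a divisible group is divisible and no nontrivial cyclic group is. Concretely, let $F$ be free of countably infinite rank, let $\phi\colon F\twoheadrightarrow\Q$ be a surjection, and let $G=\Ker\phi$. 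Then $G$ equals its own normal closure, $Q=F/\langle\langle G\rangle\rangle\cong\Q$ is not perfect, and there is \emph{no} homomorphism of $F$ onto a nontrivial cyclic group whose kernel contains $G$; so no admissible $K$ exists and your construction of $f$ cannot begin. This is not a presentational slip: any strategy that insists on a cyclic quotient of the whole group $F$ is blocked in this case.

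The repair is to pass to subgroups of $F$ rather than quotients of $F$, which is exactly the paper's maneuver ($G\leq K\lhd H\leq F$ in Corollary~\ref{corollary}): pick a nontrivial element $\bar c\in Q$, let $H\leq F$ be the preimage of the cyclic subgroup $\langle\bar c\rangle$, and note that $H$ is free (Nielsen--Schreier), contains $N:=\langle\langle G\rangle\rangle$ with $H/N\cong\langle\bar c\rangle$ cyclic and nontrivial, and has rank at least $2$ (a nontrivial normal subgroup of a nonabelian free group is nonabelian; the case $G=1$ is trivial). Your crux, applied inside $H$ to the cyclic quotient $H\to H/N$, shows that $H/[N,N]$ is non-nilpotent; since $G\leq N$ gives $G_2\subseteq[N,N]$, a containment $F_m\subseteq G_2$ would force $H_m\subseteq F_m\subseteq[N,N]$, contradicting that $(H/[N,N])_m\neq 1$ for all $m$. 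With this one-step reduction inserted, your proof is complete, and its module-theoretic core is arguably more illuminating than the paper's explicit computation.
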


This starts from a study of structures of geometric objects.
Let $\Sigma_{g,n}$ be a compact oriented surface of genus $g$ with $n$ boundary components. 
A \emph{homology cylinder} over $\Sigma_{g,n}$ is defined as a homology cobordism between two copies of~$\Sigma_{g,n}$. The set $\H_{g,n}$ of homology cobordism classes of homology cylinders becomes a group under juxtaposition. The group was introduced as an enlargement of the mapping class group by Garoufalidis and Levine \cite{GL,L01}. It is also a generalization of the concordance group of framed string links. 

In \cite{So}, the author studied the structure of $\H_{g,n}$ by defining extended Milnor invariants and Hirzebruch-type invariants for homology cylinders. Throughout this paper, $p$ denotes a prime number.
Hirzebruch-type intersection form defects associated to $p^r$-fold covers are defined by Cha in \cite{C10} to study homology cobordism of closed 3-manifolds and concordance of links.
Let $d$ be a power of~$p$. 
For a CW-complex $X$, a pair of a cover $\tilde{X}$ obtained by taking $p$-covers repeatedly and a homomorphism $\pi_1(\tilde{X})\to \Z_d$ is called a (\emph{$\Z_d$-valued}) \emph{$p$-structure} for~$X$. Here, a \emph{$p$-cover} means a cover of $p$-power degree. The invariant of a $p$-structure for a 3-manifold is the difference between the Witt classes of the $\Q(\zeta_d)$-valued intersection form and the ordinary intersection form of a 4-manifold bounded by $\tilde{X}$ over $\Z_d$, where $\zeta_d=\exp(2\pi\sqrt{-1}/d)$. This lives in the Witt group $L^0(\Q(\zeta_d))$ of nonsingular hermitian forms over $\Q(\zeta_d)$.

The invariants give rise to invariants of a subgroup of string link concordance group, consisting of $\hF$-string links \cite{C09}. We refer \cite[p.897]{C09} for the definition of $\hF$-string link. Remark that $\hF$-(string) links form the largest known class of (string) links with vanishing Milnor invariants; it is a big open problem in link theory whether all (string) links with vanishing Milnor invariants are $\hF$-(string) links. It turned out that the Hirzebruch-type invariants are homomorphisms on the subgroup of $\hF$-string links.

In \cite{So}, a Hirzebruch-type invariant $\l_\T$ is defined for homology cylinders with a $p$-structure $\T$ for $\Sigma_{g,n}$, or equivalently for $\bigvee^{2g+n-1} S^1$. The $p$-structures are classified in \cite{So}; Let $(\tilde{X},\phi\colon \pi_1(\tilde{X})\to\Z_d)$ be a $p$-structure for $X$. For the cover $\hat X$ induced by $\phi$, if $\pi_1 \hat X \supseteq (\pi_1 X)_m$, the $p$-structure is said to be \emph{of order~$m$}.
Every $p$-structure of a finite CW-complex is of order $m$ for some finite $m$; For the proof, see \cite[Lemma~5.3]{So}.
We revealed that when the invariant is defined;
For a $p$-structure $\T$ of order $m$, the invariant $\l_\T$ is defined for (the homology cobordism class of) a homology cylinder if and only if the homology cylinder has vanishing extended Milnor invariants of length~$m$.
Let $\H_{g,n}(m)$ be the subgroup of $\H_{g,n}$ consisting of homology cylinders with vanishing extended Milnor invariants of length $m$ in $\H_{g,n}$. 
For a $p$-structure $\T$ for $\Sigma_{g,n}$ of order $m$, the Hirzebruch-type invariant
$$\l_\T \colon \H_{g,n}(m) \to L^0(\Q(\zeta_d))$$
is well-defined.
A sufficient condition that $\l_\T$ is additive is given in \cite[Theorem~5.12]{So}.
It follows that $\l_\T$ is a homomorphism on $\bigcap_m \H_{g,n}(m)$ for any $p$-structure~$\T$.
Using homomorphisms $\l_\T$, it turned out that the abelianization of $\bigcap_m \H_{g,n}(m)$ contains a subgroup isomorphic to $\Z^\infty$ if $\b_1(\Sigma_{g,n})=2g+n-1>1$ \cite[Theorem~6.7]{So}. 
If we find $m$ such that the $\l_\T$ are homomorphisms on $\H_{g,n}(m)$, then we will obtain that $H_1(\H_{g,n}(m))$ also contains a subgroup isomorphic to $\Z^\infty$.
To find $\l_\T$ which is a homomorphism on $\H_{g,n}(m)$, 
the author extracted the following from the sufficient condition.
\begin{proposition}\cite[Corollary~5.13]{So}
	\label{prop}
	Let $\Sigma=\Sigma_{g,n}$. Suppose $\T= (\tilde \Sigma, \pi_1 \tilde \Sigma \to \Z_d)$ is a $p$-structure for $\Sigma$ of order $m$.
	If $(\pi_1 \hat \Sigma)_2 \supseteq (\pi_1 \Sigma)_m$ for the $\Z_d$-cover $\hat \Sigma$ of $\tilde \Sigma$ then $\T$ gives a homomorphism 
	 $\l_\T\colon \H_{g,n}(m)\to L^0(\Q(\zeta_d)).$
\end{proposition}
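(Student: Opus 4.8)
The plan is to deduce this from the additivity criterion of \cite[Theorem~5.12]{So}, by checking that its hypothesis is forced by the lower central containment $(\pi_1\hat\Sigma)_2 \supseteq (\pi_1\Sigma)_m$. Recall that $\l_\T$ is built from a difference of Witt classes of intersection forms of a $4$-manifold $W$ bounding the closed-up $\Z_d$-cover associated to $\T$: the $\Q(\zeta_d)$-twisted form minus the untwisted one. For a juxtaposition $M\cdot N$ of two homology cylinders in $\H_{g,n}(m)$ one may take $W=W_M\cup W_N$, glued along the cover of the separating middle copy of $\Sigma$, so that by Novikov additivity both the twisted and the untwisted signatures of $W$ equal the sums of the corresponding signatures of $W_M$ and $W_N$, up to a correction term supported on this middle piece. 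The sufficient condition of \cite[Theorem~5.12]{So} amounts to the cancellation of these two correction terms inside the difference defining $\l_\T$, and this cancellation holds once the curves along which the covers are glued are null-homologous in the $\Z_d$-cover $\hat\Sigma$ of the middle surface.

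The next step is to identify, group-theoretically, which curves control the correction term. Membership of $M$ in $\H_{g,n}(m)$ means that the two boundary embeddings $\Sigma\hookrightarrow\partial M$ induce the same map on $\pi_1\Sigma/(\pi_1\Sigma)_m$, so that the comparison element measuring the failure of the top and bottom structures to agree lies in $(\pi_1\Sigma)_m$ for each loop. When $M$ and $N$ are stacked, the discrepancy between the $\Z_d$-structure inherited from $M$ and that inherited from $N$ along the middle $\Sigma$ is therefore represented by elements of $(\pi_1\Sigma)_m$; these are precisely the classes whose homological triviality in $\hat\Sigma$ is needed to annihilate the gluing correction.

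Finally I would invoke the hypothesis. Since $\T$ is of order $m$ we have $(\pi_1\Sigma)_m\subseteq\pi_1\hat\Sigma$, so the comparison elements are genuine elements of $\pi_1\hat\Sigma$; the stronger assumption $(\pi_1\hat\Sigma)_2\supseteq(\pi_1\Sigma)_m$ places them in the commutator subgroup $[\pi_1\hat\Sigma,\pi_1\hat\Sigma]=(\pi_1\hat\Sigma)_2$, whence they vanish in $H_1(\hat\Sigma)=\pi_1\hat\Sigma/(\pi_1\hat\Sigma)_2$. The corresponding curves thus bound in $\hat\Sigma$, the correction term vanishes, and \cite[Theorem~5.12]{So} yields $\l_\T(M\cdot N)=\l_\T(M)+\l_\T(N)$, proving that $\l_\T$ is a homomorphism on $\H_{g,n}(m)$. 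The main obstacle is the second step: pinning down exactly which homology classes in $\hat\Sigma$ govern the Novikov correction, and verifying that they are the images of the length-$m$ Milnor comparison elements rather than some coarser data, so that membership in $(\pi_1\hat\Sigma)_2$ is genuinely the right condition to feed into the additivity criterion.
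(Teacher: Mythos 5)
You should first note a structural point: this paper never proves the proposition. It is imported verbatim from \cite[Corollary~5.13]{So} and used here as a black box, so there is no in-paper proof to compare yours against; the only available benchmark is the intended derivation, namely specializing the sufficient additivity condition of \cite[Theorem~5.12]{So} (the paper says the author ``extracted'' the corollary from that condition). Your overall plan matches that intention, and your final step is correct and consistent with the paper's own geometric reformulation: the hypothesis $(\pi_1\hat\Sigma)_2\supseteq(\pi_1\Sigma)_m$ says exactly that $(\pi_1\Sigma)_m$ dies under $\pi_1\hat\Sigma\to H_1(\hat\Sigma)=\pi_1\hat\Sigma/(\pi_1\hat\Sigma)_2$, which is the factoring condition in the paper's third Problem. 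You also correctly target only additivity, since well-definedness of $\l_\T$ on $\H_{g,n}(m)$ is established separately.

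The genuine gap is the one you flag yourself, and flagging it does not discharge it: your second step --- the claim that the gluing correction for $\l_\T(M\cdot N)$ is governed precisely by the images in $H_1(\hat\Sigma)$ of comparison elements lying in $(\pi_1\Sigma)_m$ --- is asserted, not proven, and it is exactly the content of \cite[Theorem~5.12]{So}, whose actual statement appears neither in this paper nor in your argument; you reverse-engineer a plausible version of it from the desired conclusion. The surrounding sketch is also technically loose: $W_M$ and $W_N$ are $4$-manifolds, and gluing them ``along the cover of the separating middle copy of $\Sigma$'' is a gluing along a $2$-dimensional (or thickened $3$-dimensional) piece, where Novikov additivity does not apply as stated --- one must control a Wall-type non-additivity term, or, in the framework of \cite{C10} and \cite{So}, compare the two induced $p$-structures on the stacked cylinder via a cobordism term. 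Without the precise hypothesis of \cite[Theorem~5.12]{So} in hand, none of this can be checked, so your proposal is a proof outline whose central identification remains open. The complete argument available at the level of this paper is simply the citation, together with your (correct) one-line verification that $(\pi_1\Sigma)_m\subseteq(\pi_1\hat\Sigma)_2=[\pi_1\hat\Sigma,\pi_1\hat\Sigma]$ feeds the quoted sufficient condition.
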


This naturally poses the problem to find a $p$-structure $\T$ for $\Sigma$ satisfying the assumption of the proposition. The problem can be interpreted algebraically as follows:

\begin{problem}
	Suppose $F$ is a finitely generated free group. Find a proper subgroup $G$ of $F$ such that there is an ascending chain $G=F_{(k)}\lhd F_{(k-1)} \cdots \lhd F_{(1)} \lhd F_{(0)}=F$ with each $F_{(i)}/F_{(i+1)}$ an abelian $p$-group and $G_2\supseteq F_m$ for some~$m$.
\end{problem}

We can simplify the problem as follows:
\begin{problem}(simple version)
	Suppose $F$ is a finitely generated free group. Find a proper normal subgroup $G$ such that $F/G$ is abelian and $G_2\supseteq F_m$ for some $m$.
\end{problem}

This is equivalent to the following geometric problem which is the core of the original problem: 
\begin{problem}
	Let $X$ be a CW-complex with $\pi_1 X $ free. Find an abelian cover $\tilde{X}$ of $X$ such that the natural map $\pi_1\tilde{X}/(\pi_1\tilde{X})_m \to H_1(\tilde{X})$ factors through $\pi_1\tilde{X}/(\pi_1 X)_m$ for some $m\geq 2$.
\end{problem}

But, we finally obtain non-existence for the above problems as Theorem~\ref{theorem} shows.
That said, it does not mean that there is no homomorphism $\l_\T$ on $\H_{g,n}(m)$ since Proposition~\ref{prop} follows from only a sufficient condition for $\l_\T$ to be additive in \cite[Theorem~5.12]{So}.

Extending the domain of $\l_\T$ as a homomorphism may help study the mapping class groups of surfaces. The restriction of $\H_{g,n}(m)$ on the mapping class group is the \emph{Johnson filtration} $\M_{g,n}[m]:=\Ker\{\M_{g,n}\to\Aut(F/F_m)\}$. In other words, $\H_{g,n}(m) \cap \M_{g,n} = \M_{g,n}[m]$. The subgroups $\M_{g,n}[2]$ and $\M_{g,n}[3]$ are well known as the \emph{Torelli group} and the \emph{Johnson kernel}, respectively.
In 1938, Dehn proved that $\M_{g,n}$ is finitely generated \cite{D}.
In 1983, Johnson proved that $\M_{g,1}[2]$ and its quotient $\M_{g,0}[2]$ are also finitely generated for $g\geq 3$, but it is discovered that $\M_{2,1}[2]$ and $\M_{2,0}[2]=\M_{2,0}[3]$ are infinitely generated by McCullough and Miller \cite{MM} in 1986. Thereby, the question whether $\M_{g,n}[3]$ is finitely generated for $g \geq 3$ has received a lot of attention since 1990s. Just lately, for $n=0,1$, Ershov and He \cite{EH} showed that $\M_{g,n}[3]$ is finitely generated if $g\geq 12$ and $H_1(\M_{g,n}[m])$ is also finitely generated if $m\geq 3, g\geq 8m-12$. Church, Ershov and Putman proved that also for $n=0,1$, $\M_{g,n}[3]$ is finitely generated if $g\geq4$ and $\M_{g,n}[m]$ is finitely generated if $m\geq 4, g\geq 2m-3$ in \cite{CEP}.
It is still open whether $\M_{g,n}[m]$ is finitely generated for general $g$ and~$n$.
The Hirzebruch-type invariants may be used to prove that the abelianization is infinitely generated if we find a homomorphism $\l_\T$ on the higher order Johnson subgroup.

\subsection*{Acknowledgements}
The author thanks Jae Choon Cha for helpful comments. The work was supported by NRF grant 2011-0030044 (SRC-GAIA).

\section{Non-existence of subgroups}

We denote $[x,y]:=xy\bar x \bar y$ where $\bar x$ means $x^{-1}$.

\begin{theorem}
\label{theorem:core}
	Suppose $F$ is a finitely generated free group. Then there is no normal subgroup of $F$ of prime index whose commutator subgroup contains a term of the lower central series of $F$.
\end{theorem}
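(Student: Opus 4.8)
The plan is to recast the statement in terms of the single metabelian quotient $Q := F/[G,G]$ and to show that $Q$ is \emph{not} nilpotent. Indeed, if $G \lhd F$ has prime index $p$, then $F/G \cong \Z_p$ is abelian, so $F_2 \subseteq G$, and the quotient map $\pi\colon F \to Q$ carries $F_m$ onto the lower central subgroup $Q_m$. Hence $F_m \subseteq [G,G]$ for some $m$ if and only if $Q_m = 1$ for some $m$, i.e.\ if and only if $Q$ is nilpotent. So it suffices to prove that $Q$ is never nilpotent. (Here I assume $\mathrm{rank}\,F \geq 2$; the rank-one case is genuinely degenerate, since then $F_m = 1$ for all $m \geq 2$ and the statement fails.)

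Next I would describe the structure of $Q$. By Nielsen--Schreier $G$ is free of rank $p(\mathrm{rank}\,F - 1)+1$, so $A := G/[G,G] = H_1(G)$ is a finitely generated free abelian group, and $Q$ sits in an extension $1 \to A \to Q \to \Z_p \to 1$. Conjugation makes $A$ a module over $\Z[\Z_p] = \Z[t]/(t^p-1)$, where $t$ is the image of a generator of $F/G$. A short computation then determines the entire lower central series: writing $A$ additively, $[t,a]=(t-1)a$ gives $Q_2=(t-1)A$, and since $A$ is abelian one has $[Q_m,A]=0$ for $m\geq 2$, so the recursion $Q_{m+1}=[Q_m,Q]=(t-1)Q_m$ yields $Q_m=(t-1)^{m-1}A$ for all $m\geq 2$. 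Consequently $Q$ is nilpotent if and only if the operator $t-1$ acts nilpotently on $A$.

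The heart of the argument is therefore to show that $t-1$ is \emph{not} nilpotent on $A$, and for this I would identify $A$ as a $\Z[\Z_p]$-module. After applying an automorphism of $F$ realizing a change of basis in $GL_n(\Z)$, I may assume the surjection $F \to \Z_p$ sends a fixed free generator $x_1 \mapsto 1$ and the remaining generators to $0$. Computing Schreier generators for $G$ with transversal $\{1, x_1, \dots, x_1^{p-1}\}$ then exhibits $A$ explicitly: the element $x_1^p$ yields a trivial summand $\Z$, while for each remaining generator $x_j$ the conjugates $x_1^a x_j x_1^{-a}$ ($0 \leq a \leq p-1$) are cyclically permuted by $t$ and span a copy of the regular module $\Z[\Z_p]$. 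Thus $A \cong \Z \oplus \Z[\Z_p]^{\,\mathrm{rank}\,F - 1}$, and since $\mathrm{rank}\,F \geq 2$ at least one regular summand is present. Tensoring with $\Q$ and using $\Q[\Z_p] \cong \Q \times \Q(\zeta_p)$, the operator $t-1$ acts on the factor $\Q(\zeta_p)$ as multiplication by $\zeta_p - 1 \neq 0$, which is invertible; hence $(t-1)^k A \neq 0$ for every $k$, so $t-1$ is not nilpotent and $Q$ is not nilpotent.

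I expect the main obstacle to be the module computation in the last paragraph, together with the point that it must be carried out integrally (or rationally) rather than modulo $p$: over $\mathbb{F}_p$ one has $(t-1)^p = t^p - 1 = 0$, so $t-1$ \emph{is} nilpotent after reduction mod $p$ and no contradiction is visible there. It is precisely the cyclotomic factor $\Q(\zeta_p)$, on which $\zeta_p - 1$ is a unit, that defeats nilpotence, and recognizing that $A$ genuinely contains such a constituent (equivalently, computing the equivariant homology of the $\Z_p$-covering graph) is the crux. The reduction to non-nilpotence of $Q$ and the lower-central-series recursion are then routine.
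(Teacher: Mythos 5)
Your proof is correct, but it takes a genuinely different route from the paper's. The paper never forms the quotient $Q=F/[G,G]$: it reduces to $F=\langle x,y\rangle$, takes the explicit iterated commutators $\omega_n=[x,y,x,\ldots,x]\in F_{n+2}$, and shows their classes in $H_1(G)$ are nonzero via a linear recursion whose matrix is a circulant with eigenvalues $1-\zeta_p^j$; the initial vector has a component along an eigenvector with nonzero eigenvalue, so the iterates never vanish. Unwinding that argument, the class of $\omega_n$ in $H_1(G)$ is exactly $\pm(t-1)^{n+1}[b_1]$, so the paper is verifying a special case of your statement that $t-1$ is not nilpotent on $A=H_1(G)$: the arithmetic heart (the cyclotomic eigenvalues $\zeta_p-1\neq 0$, which forces one to work over $\Q$ or $\C$ rather than $\mathbb{F}_p$) is identical, and your closing remark about the mod-$p$ collapse is precisely the phenomenon the paper's eigenvalue computation sidesteps. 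What differs is the packaging, and each version buys something. Your route computes the full lower central series $Q_m=(t-1)^{m-1}A$ together with the module identification $A\cong\Z\oplus\Z[\Z_p]^{\mathrm{rank}F-1}$; this treats every finitely generated $F$ of rank at least $2$ uniformly, shows exactly how far $F_m$ is from lying in $[G,G]$, and extends with no extra work to $F/G$ cyclic of any finite order (an extension the paper only asserts in a remark after the proof). The cost is the extra bookkeeping: the fact that a surjection carries $F_m$ onto $Q_m$, the Schreier-basis computation, and the submodule argument behind the recursion $Q_{m+1}=(t-1)Q_m$. The paper's route is more elementary and more constructive—it exhibits concrete witnesses $\omega_n\in F_m\setminus G_2$—but it is tied to rank $2$, with the general finitely generated case recovered only later, inside the proof of Corollary~\ref{corollary}, by restricting to a rank-two subgroup. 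Finally, your parenthetical about rank one is a genuine catch: for $F\cong\Z$ the statement as literally written fails (take $G=\langle x^p\rangle$, so $[G,G]=\{1\}=F_2$), and the paper's proof silently assumes rank at least $2$ by passing to $\langle x,y\rangle$.
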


\begin{proof}
Suppose there is an index $p$ normal subgroup $G$ of $F$ such that the commutator subgroup $[G, G]$ contains $F_m$ for some $m\in \N$. Then $G$ can be considered as the kernel of a surjective homomorphism $F\twoheadrightarrow \Z_p$.

It is enough to show that if $F=\langle x, y \rangle$ and $G=\Ker\{F\xrightarrow{f}\Z_p\}$ where $f(x)=1, f(y)=0$, then $G_2 \nsupseteq F_m$ for all~$m$.

Let $\omega_n:=[\cdots[[x,y],x],\ldots,x]=[x,y,\underbrace{x,\ldots,x}_{n\textrm{ times}}] \in F_{n+2}$ for $n\geq 0$. We claim that $\omega_n \notin G_2$ for every $n\in \N$. Since $\omega_n$ is an element of $G$, our claim is equivalent that $[\omega_n]\neq 0$ in $G/G_2=H_1(G)$.

The subgroup $G=\langle \langle x^p, y \rangle \rangle_F = \langle x^p, y, xy\bar x, x^2 y \bar x^2, \ldots, x^{p-1} y \bar x^{p-1} \rangle$. Let $a:= x^p$ and $b_k:=x^{k-1} y \bar x^{k-1}$ for $k=1,\ldots,p$, then $G= \langle a, b_1, \ldots, b_p \rangle$. Denote by $S$ the free generating set $\{a, b_1,\ldots, b_p\}$.
 
For $\omega \in G$ and $k=1,\ldots, p$, let $P_k(\omega)$ be the sum of the powers of $b_k$ in $\omega$ as a word expressed in $S$. In other words, $P_k(\omega)$ is the power of $[b_k]$ in~$[\omega]\in H_1(G)$. 
We note that 
\begin{equation}
	xa\bar x=a, xb_1\bar x= b_2, \ldots, x b_{p-1} \bar x = b_p, x b_p \bar x = a b_1 \bar a.
\end{equation}
Thus, conjugating any element of $G$ by $x$ preserves the sum of powers of $a$ in a word in $S$.

Since $a$ does not appear in the reduced word of $\omega_0=xy\bar x \bar y = b_2 \bar b_1$, $[\omega_n]=0 \in H_1(G)$ if and only if $P_k(\omega_n)=0$ for all~$k$.
We observe $P_1(\omega_0)=-1, P_2(\omega_0)=1, P_k(\omega_0)=0$ for $k\geq 3$, and $P_k(\omega_{n+1})=P_k([\omega_n, x])=P_k(\omega_n) + P_k(x\bar \omega_n \bar x) = P_k(\omega_n) - P_k(x \omega_n \bar x)=P_k(\omega_n)-P_{k-1}(\omega_n)$. The last equality comes from $(1)$. Hence we obtain

$$
 \underbrace{  \begin{bmatrix}
	 \vphantom{\ddots} P_1(\omega_n) \\ \vphantom{\ddots} P_2(\omega_n) \\ \vphantom{\ddots}P_3(\omega_n) \\ \vphantom{\ddots}\vdots \\ \vphantom{\ddots}P_p(\omega_n)
    \end{bmatrix}}_{\bold{v}_n}
 =  {\underbrace{
    \begin{bmatrix}
    \vphantom{\ddots} 1 &  & & & -1\\
    \vphantom{\ddots} -1 & 1 &   \\
    \vphantom{\ddots} & -1 & 1 & \\
    \vphantom{\ddots} & & \ddots & \ddots &  \\
    \vphantom{\ddots}  & & & -1 & 1
  \end{bmatrix}}_{A}}^n
 \underbrace{   
  \begin{bmatrix}
	\vphantom{\ddots} -1 \\ \vphantom{\ddots}1 \\ \vphantom{\ddots}0 \\ \vphantom{\ddots}\vdots \\ \vphantom{\ddots}0
    \end{bmatrix}}_{\bold{v}_0}
  $$
Let us calculate the eigenvalues of~$A$. Since $\det(A- \lambda I) = (1- \lambda)^p - 1$, 
the eigenvalues $\lambda_j$ of $A$ are $1-\zeta^j$ where $\zeta$ is the $p$-th root of unity $e^{2\pi i/p}$ and $j=1,\ldots, p$.
The corresponding eigenvector $\bold{x}_j$ to the eigenvalue $\lambda_j$ is 
$
  \begin{bmatrix}
	\vphantom{\ddots} 1 \\ \vphantom{\ddots} \zeta^{(p-1)j} \\ \vphantom{\ddots}\vdots \\ \vphantom{\ddots}\zeta^{2j} \\ \vphantom{\ddots} \zeta^j
    \end{bmatrix} .
$

Since the eigenvalues $\lambda_j$ are all distinct, $\bold{x}_j$ are linearly independent. Thus, $\bold{v}_0$ can be expressed as a linear combination of $\bold{x}_j$. Let $\bold{v}_0 = \sum_{i=1}^{p}  \alpha_j \bold x_j$. Note that $\alpha_j$ is nonzero for some $j\neq p$ since $\bold v_0 \neq \alpha \bold x_p$ for any~$\alpha$. Therefore, $\bold v_n= A^n \bold v_0 = \sum _{i=1}^{p} \alpha_j \lambda_j^n \bold x_j $ is nonzero for any $n\geq 1$. In conclusion, $\omega_n$ is not an element of $G_2$, and it implies that $G_2$ does not contain any~$F_m$.
\end{proof}

Note that prime index does not guarantee normality. For instance, there is a non-normal subgroup $\langle a, b^2, ba^2 b, babab \rangle$ of index 3 in $\Z\ast\Z=\langle a, b \rangle$. 

In fact, the same argument holds not only for $p$ prime, but also when $p$ is replaced by an arbitrary integer $>1$.
Hence the theorem also holds not only for index $p$ normal subgroups but also for normal subgroups with finite cyclic factor groups. Moreover, we can extend Theorem~\ref{theorem:core} as follows:

\begin{corollary}
	\label{corollary}
	Let $F$ be a (possibly infinitely generated) free group. Suppose $G$ is a subgroup of $F$ such that there are $H$ and $K$ with $G\leq K \lhd H \leq F$, a nontrivial abelian factor group $H/K$. Then $G_2$ does not contain $F_m$ for any $m \in \N$.
\end{corollary}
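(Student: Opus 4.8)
The plan is to reduce the Corollary to the matrix computation already carried out in the proof of Theorem~\ref{theorem:core}, but to run that detection \emph{inside the free group $H$} rather than inside $F$. First, since $G\le K$ we have $G_2\le K_2$, so it suffices to produce, for each $m$, an element of $F_m$ outside $K_2$; I may therefore assume $G=K$, i.e.\ $G\lhd H\le F$ with $H/G$ abelian and nontrivial. (If $G$ is abelian then $G_2=1$, and the assertion is immediate for $F$ nonabelian since then $F_m\neq 1$; so I assume $G$ nonabelian, whence $\rank H\ge 2$.) Next I reduce to the case that the factor group is cyclic: choosing $t\in H\setminus G$ and replacing $H$ by $\langle G,t\rangle$, one still has $G\lhd H\le F$ while now $H/G=\langle tG\rangle=:C$ is a nontrivial cyclic group of order $q\in\{2,3,\dots\}\cup\{\infty\}$. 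This passage to a cyclic \emph{subquotient} is the crucial move, since it yields a cyclic quotient without assuming that $H/K$ possesses any nontrivial cyclic quotient at all (which can fail, for instance when $H/K\cong\Q$).

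The reason for working inside $H$ is that $H\le F$ gives $H_m\subseteq F_m$ for every $m$, so it is enough to exhibit, for each $n$, an element $\omega_n\in H_{n+2}$ with $\omega_n\notin G_2$; this sidesteps entirely the difficulty that the normal closure of $G$ in $F$ may be all of $F$ (so that $G_2$ is undetectable by any homomorphism defined on $F$). Concretely, let $\Phi=\langle x,y\rangle$, let $f\colon\Phi\to C$ send $x$ to a generator of $C$ and $y$ to $1$, and set $N=\ker f$. I will build a homomorphism $\Theta\colon H\to\Phi$ with $f\circ\Theta=\pi$, where $\pi\colon H\to H/G=C$ is the quotient map; then $\Theta(G)=\Theta(\ker\pi)\subseteq N$, and composing $\Theta|_G$ with $N\to N/N_2$ gives a homomorphism $\chi\colon G\to N/N_2$ into an abelian group, so that $\chi(G_2)=0$ automatically.

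To construct $\Theta$ I choose a free basis of $H$ adapted to $\pi$: by Nielsen transformations one can arrange a basis containing a distinguished element $s$ with $\pi(s)$ a generator of $C$ and with every other basis element lying in $G=\ker\pi$. Since $\rank H\ge 2$ there is a further basis element $b_*\in G$; I put $\Theta(s)=x$, $\Theta(b_*)=y$, and send all remaining basis elements to $1$. Then $f\circ\Theta=\pi$ holds on generators, hence everywhere. Setting $\omega_n:=[s,b_*,\underbrace{s,\dots,s}_{n}]\in H_{n+2}\subseteq F_{n+2}\cap G$, and using that $s,b_*$ are part of a basis (so $\langle s,b_*\rangle$ is free of rank $2$), one gets $\Theta(\omega_n)=[x,y,\underbrace{x,\dots,x}_{n}]$, which is precisely the element analysed in the proof of Theorem~\ref{theorem:core}.

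Finally I invoke that computation: in the $\Z[C]$-module $N/N_2$ the class of $[x,y,x,\dots,x]$ equals $\pm(1-c)^{\,n+1}[y]$ (where $c$ generates $C$), and this is nonzero --- for $C$ finite by the eigenvalue argument of Theorem~\ref{theorem:core}, which as noted after that proof is valid for every modulus $>1$, and for $C\cong\Z$ because $N/N_2$ is then a free module over the integral domain $\Z[C]=\Z[c^{\pm1}]$. Hence $\chi(\omega_n)\neq 0$, so $\omega_n\notin G_2$, which gives $F_{n+2}\nsubseteq G_2$ for all $n\ge 0$ (and $F_1=F\nsubseteq G_2$ trivially). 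I expect the main obstacle to be the construction of the adapted basis when $F$, and hence $H$, has infinite rank: one must justify that the required (possibly infinite) family of Nielsen substitutions $b\mapsto b s^{-k_b}$ defines a genuine automorphism of $H$. Everything else is either a formal reduction or a direct appeal to the computation already established.
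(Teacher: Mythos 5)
Your proof is correct, but it is not the paper's proof: the two differ at both main steps, and your version is in fact more robust. To globalize the rank-two computation, the paper intersects downward --- it chooses a basis of $F$ adapted to $f\colon F\twoheadrightarrow\Z_p$, sets $H=\langle x_1,x_2\rangle$, and uses the free-factor identity $(H\cap G)_2=H\cap G_2$ (left implicit there) --- whereas you map outward, pushing the test elements $\omega_n$ forward along $\Theta\colon H\to\Phi$; this trades the free-factor lemma for the construction of $\Theta$, and both constructions rest on the same adapted-basis fact that you flag as the main obstacle and the paper passes over with ``we can assume'' (your worry is resolvable: each substitution $b\mapsto bs^{-k_b}$ involves only $b$ and the fixed element $s$, so the family is an invertible change of basis even when infinite). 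More importantly, to reduce the corollary to the cyclic case the paper asserts that the nontrivial abelian group $H/K$ admits an epimorphism onto a cyclic group of prime order; this is false for divisible groups, and $H/K\cong\Q$ can genuinely occur when $F$ has infinite rank, which is precisely the generality the corollary advertises. Your passage to the cyclic subquotient $\langle G,t\rangle/G$ --- which you correctly single out as the crucial move --- avoids this entirely and thus repairs a real gap in the paper's argument (the paper's application in Theorem~\ref{theorem} survives regardless, since there one may simply choose the abelian subgroup $H/K$ of $F/K$ to be cyclic). The price of your reduction is the extra case $C\cong\Z$, which you handle correctly via freeness of $N/N_2$ over the domain $\Z[c^{\pm1}]$; alternatively you could compose $\langle G,t\rangle\to\Z\twoheadrightarrow\Z_p$ and quote only the finite case, where the paper's eigenvalue computation (valid for any modulus $>1$, as remarked after Theorem~\ref{theorem:core}) applies. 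One shared caveat: both arguments implicitly assume $F$ is nonabelian --- for $F\cong\Z$ the choice $G=K=p\Z\lhd H=\Z$ satisfies the hypotheses while $G_2=F_2=1$, so the statement as written needs rank at least two; your parenthetical reduction to nonabelian $G$ quietly makes this assumption, and it is in fact necessary.
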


\begin{proof}
	First we generalize Theorem~\ref{theorem:core} to a free group of arbitrary rank. Let $G$ be a normal subgroup of index $p$ where $p$ is a prime. We can assume that $\{ x_i~|~i\in I\}$ is a free generating set of $F$ with an index set $I \ni 1,2$ and $G=\Ker\{f\colon F\twoheadrightarrow \Z_p\}$ with $f(x_1)=1$, $f(x_j)=0$ for $j\neq 1 \in I$. Suppose $G_2 \supseteq F_m$ for some $m$. Let $H=\langle x_1, x_2\rangle$, a subgroup of $F$. Then, $H\cap G=\Ker\{f|_H\colon H\twoheadrightarrow \Z_p\}$ is an index $p$ normal subgroup of $H$. But, $(H\cap G)_2 =  H \cap G_2 \supseteq H \cap F_m \supseteq H_m$. It contradicts Theorem~\ref{theorem:core}.
	 
	Now let us extend $G$ to a subgroup of $F$ with $G\leq K \lhd H \leq F$ and nontrivial abelian $H/K$.
	Suppose $G_2 \supseteq F_m$ for some $m \in \N$. Then, $K_2\supseteq G_2 \supseteq F_m \supseteq H_m$. There is a prime index normal subgroup $K'$ of $H$ which contains $K$ since there is an epimorphism of $H/K$ onto a cyclic group of prime order. We have $(K')_2\supseteq K_2\supseteq H_m$, which is a contradiction.
\end{proof}

For instance, if $F/G$ is the alternating group $A_5$, it has abelian subgroups isomorphic to $\Z_2, \Z_3, \Z_5$, so $G$ satisfies the hypothesis of the above corollary.

Lastly, we give a proof of Theorem~\ref{theorem} stated in the introduction:
\begin{proof}[Proof of Theorem~\ref{theorem}]
	Let $K$ be the normal closure of $G$. Every nontrivial group has a nontrivial abelian subgroup, so there is a nontrivial abelian subgroup $H/K$ of $F/K$. Then $G\leq K \lhd H \leq F$ satisfies the hypothesis of Corollary~\ref{corollary}.
Consequently, the conclusion of the corollary holds for every subgroup whose normal closure is not $F$. Hence we obtain Theorem~\ref{theorem}.
\end{proof}

\end{document}